\numberwithin{equation}{section}
\newcommand{\hh}{\mathbb{H}}
\newcommand{\pp}{{\mathbb P}}
\newcommand{\RR}{\mathbb R}
\def\Z{Z\!\!\!Z}
\def\C{{\mathbb C}}
\def\t{\theta}
\def\tt#1#2{{\t\left[\begin{matrix}{#1}\\ {#2}\end{matrix}\right]}}
\newcommand{\be}{\begin{equation}}
\newcommand{\ee}{\end{equation}}
\newcommand{\bes}{\begin{equation*}}
\newcommand{\ees}{\end{equation*}}
\newcommand{\eqn}{\begin{eqnarray}}
\newcommand{\feqn}{\end{eqnarray}}
\newcommand{\eqnn}{\begin{eqnarray*}}
\newcommand{\feqnn}{\end{eqnarray*}}
\newtheorem{theorem}{Theorem}
\newtheorem{deff}{Definition}
\newtheorem{remark}{Remark}
\begin{document}

\title{A new proof of the Caporaso-Sernesi theorem via Weber's formula}

\author{Francesco Dalla Piazza}
\address{Dipartimento di Matematica, Universit\`a ``La Sapienza'', Piazzale A. Moro 2, I-00185, Roma, Italy}
\email{dallapiazza@mat.uniroma1.it, f.dallapiazza@gmail.com}

\author{Alessio Fiorentino}
\address{Dipartimento di Matematica, Universit\`a ``La Sapienza'', Piazzale A. Moro 2, I-00185, Roma, Italy}
\email{fiorentino@mat.uniroma1.it}
%


\begin{abstract}
In this paper we give a new proof of Caporaso and Sernesi's result which states that the general plane quartic is uniquely determined by its 28 bitangents. Our proof uses classical geometric results, as it is based on Weber's formula and on the injectivity of the $\theta^{(4)}$ map.
\end{abstract}

\maketitle

\section{Introduction}\noindent
It is classically known that the number of the bitangents to a non singular curve of degree $d$ in the projective plane is given by the formula $\frac{1}{2}d(d-2)(d^2-9)$. 
The first important result that relates the configurations of bitangents to the geometry of the curve is Arhonold's classical theorem which states that a smooth plane quartic can be recovered by the configuration of any of the $288$ $7$-tuples of bitangents such that the six contact points of any subtriple of bitangents do not belong to the same conic in the projective plane;
these $7$-tuples of bitangents are known as Aronhold systems. 
There have been several generalizations of this result over the decades. One of the most relevant is due to Caporaso and Sernesi \cite{CS}. Using GIT techniques, they actually improved it by proving the following theorem.
\begin{theorem}[Caporaso-Sernesi] \label{thm:CS}
The general plane quartic is uniquely determined by its 28 bitangent lines.
\end{theorem}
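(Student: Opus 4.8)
The plan is to reconstruct $C$ by descending from its $28$ bitangents to the period matrix of its Jacobian and then applying Torelli's theorem. A general plane quartic is a smooth, non-hyperelliptic curve of genus $3$, canonically embedded in $\pp^2$, so recovering $C$ up to projective equivalence is the same as recovering the principally polarized abelian variety $(JC,\Theta)$, i.e.\ a point $\tau\in\hh_3$ modulo the relevant arithmetic group. The whole argument is thus a chain: bitangent configuration $\leadsto$ even theta constants $\leadsto$ $\tau$ $\leadsto$ $C$.

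First I would invoke the classical dictionary identifying the $28$ bitangents with the $28$ odd theta characteristics: the bitangent attached to an odd characteristic $\eta$ is the zero locus of the linear form whose coefficients are the theta gradients $\partial_i\t[\eta](0,\tau)$. Hence the projective configuration of the bitangents records these gradient vectors up to a common linear change of coordinates and up to individual rescalings.

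The crux is Weber's formula, which I would use to express the even theta constants through the bitangent data: for suitable (azygetic) triples of odd characteristics, the $3\times 3$ determinant built from the coefficient vectors of the corresponding three bitangents equals, up to an explicit constant, a product of even theta constants $\t[\epsilon](0,\tau)$. Combining sufficiently many of these relations, one reconstructs from the bitangents alone the fourth powers $\t[\epsilon](0,\tau)^4$ as a well-defined point of $\pp^{35}$; the passage to fourth powers is precisely what cancels the sign and scaling indeterminacies that make the individual theta gradients and bitangent forms ambiguous. Feeding this point into the injectivity of the $\theta^{(4)}$ map---the map sending $\tau$ to the vector of fourth powers of its even theta-nulls---then pins down $\tau$, and Torelli returns $C$.

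The step I expect to be hardest is controlling this ambiguity: since the bitangents are only given as lines and the theta gradients only up to scalar, no single even theta constant is recoverable, and one must verify that the particular combinations prescribed by Weber's formula genuinely descend to the unambiguous fourth powers. This is also where genericity is essential---I would restrict to the dense open locus of quartics on which all even theta-nulls are nonzero, the relevant syzygetic determinants are non-vanishing, and the $\theta^{(4)}$ map is injective---so that ``general'' in the statement is exactly the hypothesis making each link of the chain valid.
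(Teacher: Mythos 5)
Your proposal follows essentially the same route as the paper: identify the bitangents with the odd theta gradients, use Weber's formula to extract the fourth powers of ratios of even theta constants from the bitangent configuration (observing, as the paper does, that each bitangent's coefficient vector appears equally often in numerator and denominator so the scaling ambiguity cancels), and conclude via the injectivity of the $\theta^{(4)}$ map and Torelli. You correctly isolate the key point---that only the scale-invariant combinations prescribed by Weber's formula descend to well-defined quantities---so the argument matches the paper's proof in substance.
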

In \cite{lh} Lehavi proved that a non singular plane quartic can be reconstructed from its $28$ bitangents,  providing a method to derive an explicit formula for the curve. These results are stronger than Aronhold's one, because the knowledge of both the bitangents and their contact points on the curve is needed to get the configuration of the Aronhold systems, whereas the sole configuration of the bitangents is enough to describe the geometry of the plane quartic.
In this paper we give a new proof of Theorem \ref{thm:CS} using classical geometric results.

In general, a ratio of two theta constants can be written as a rational function of Jacobian determinants of gradients of odd theta functions, i.e.:
\be \label{ident}
\frac{\theta_{m_1}}{\theta_{m_2}}=\frac{p(D(n_1,\ldots,n_g))}{q(D(k_1,\ldots,k_g))}.
\ee
This is essentially due to the generical injectivity of the theta gradients map \cite{mainarticle}:
\begin{align*}
\mathcal{G}_g: \mathcal{A}_g^{4,8} &\longrightarrow \rm{Gr}_{\C}(g,2^{g-1}(2^g-1)) \\
\tau &\longrightarrow [\ldots ,{\rm grad}_z\theta_n|_{z=0}(\tau), \ldots]_{n\,{\rm odd}},
\end{align*}
whose Pl\"{u}cker coordinates are the Jacobian determinants $D(n_1,\ldots,n_g)$. Since the map is birational, the field of rational functions on the level moduli space $\mathcal{A}_g^{4,8}$ is isomorphic to $\mathbb{C}[\frac{D(n_1,\ldots,n_g)}{D(k_1,\ldots,k_g)}]$. On the other hand, the field of rational functions on $\mathcal{A}_g^{4,8}$ is also isomorphic to $\mathbb{C}[\frac{\theta_{m_1}}{\theta_{m_2}}]$ by the generical injectivity of the theta map:
\begin{align*}
\bar{\mathbb{H}_g}/\Gamma_g(4,8) &\longrightarrow \pp^{2^{g-1}(2^g+1)-1}\\
\tau &\longmapsto [\theta_{m_1}(\tau):\ldots :\theta_{m_{2^{g-1}(2^g+1)}}(\tau)].
\end{align*}
In genus three an explicit expression for the identity \eqref{ident} is due to Weber \cite{W}.

Our proof of Theorem \ref{thm:CS} is based on the  injectivity of the $\theta^{(4)}$ map \cite{SM} and on Weber's formula. The main point is that in Weber's formula the right side of \eqref{ident} is actually a ratio of monomials in the Jacobian determinants and the odd characteristics defining each Jacobian determinant appear with the same multiplicity in the numerator and in the denominator.


\section {Aknowledgments}
We are grateful to Riccardo Salvati Manni for explaining us this interesting topic and for several stimulating discussions and suggestions.

\section{Riemann theta functions and the Siegel modular group}
The tube domain $\hh_g$ of complex symmetric $g \times g$ matrices with positive definite imaginary part is known as the {\it Siegel upper half-plane of degree $g$}. A transitive action of the symplectic group $Sp(2g,\RR)$ is defined on $\hh_g$  by biholomorphic automorphisms:
\begin{align} \label{act}
\cdot:Sp(2g,\RR) \times \hh_g &\longrightarrow \hh_g \cr
\left ( \gamma , \tau \right ) &\longmapsto \gamma\cdot\tau:=(a \tau + b)(c \tau + d)^{-1},
\end{align}
 where the generic element of $Sp(2g,\RR)$ is conventionally written in a standard block notation as:
\bes
 \gamma = \begin{pmatrix} a & b \\ c & d \end{pmatrix}   \quad \quad \text{with} \, \, a,  b, c, d \, \, \text{real $g \times g$ matrices.} 
\ees
The {\it Siegel modular group} $\Gamma_g:= Sp(2g,\Z)$ on $\hh_g$ is a remarkable subgroup of $Sp(2g,\RR)$ because of its geometrical relevance. Since its action is properly discontinuous, the coset space $ \mathcal{A}_g := {\hh_g}/{\Gamma_g}$ is a normal analytic space (cf. \cite{123} and \cite{123o}), which is classically known to be isomorphic to the moduli space of principally polarized abelian varieties (p.p.a.v.), see \cite{D,GH} for details.
A {\it congruence subgroup} of the Siegel modular group $\Gamma_g$ is a subgroup $\Gamma$ containing for some $n \in \mathbb{N}$ the level subgroup:
\bes
\Gamma_g(n) = \{\, \gamma \in \Gamma_g \mid \gamma \equiv 1_{2g} \, \text{mod} \,  n  \,\}.
\ees The subgroups $\Gamma_g(n)$, which are of course the simplest examples of congruence subgroups, are normal in $\Gamma_g$.

For $m',m''\in \Z^g$ and $z\in \C^g$ the {\it Riemann theta function with characteristic} $m= \left[{}^{m'}_{m''} \right]$ is the series:
\bes
\theta_m(\tau,z):=\tt {m'}{m''}(\tau,z):=\sum\limits_{p\in\Z^g} \exp \left[\left(
p+\frac{m'}{2},\tau(p+\frac{m'}{2})\right)+2\left(p+\frac{m'}{2},z+
\frac{m''}{2}\right)\right],
\ees
where $\exp(z):=e^{\pi i z}$  and $(\cdot,\cdot)$ at the exponent stands for the standard inner product in the complex euclidean space. As a holomorphic function on $\hh_g \times \C^g$ the Riemann theta function with characteristic $m$ is characterized up to a constant factor by the equation:
\be
\label{signtheta}
\theta_{m+2n}(\tau , z) = (-1)^{{}^tm'n''} \theta_{m} (\tau , z )
\ee 
and by the heat equation (cf. \cite{Igusa3}).
By virtue of (\ref{signtheta}), Riemann theta functions are parametrized up to a sign by a {\it $g$-characteristic}, namely a column vector $\begin{bmatrix} m' \\ m'' \end{bmatrix}$ with $m', m'' \in \Z_{2}^g$. The set of $g$-characteristics will be conventionally denoted by the symbol $\mathcal{C}_g$. For each $m \in \mathcal{C}_g$ the function $\theta_m: \hh_g \rightarrow \C$ defined by $\theta_m (\tau) := \theta_m(\tau , 0)$ is known as the {\it theta constant} with $g$-characteristic $m$ (or simply with characteristic $m$, when there is no ambiguity). A parity function  $e(m):=(-1)^{^t m' m''}$ can be defined on the set $\mathcal{C}_g$ so as to classify the $2^{2g}$ $g$-characteristics into {\it even} and {\it odd} ones, respectively if $e(m)=1$ or $e(m)=-1$:
\bes
\begin{array}{lll}
E_g:=\{ m \in \mathcal{C}_g \mid e(m)=1\},& \rm{card}(E_g)=2^{g-1}(2^g+1),\\
\\
O_g:=\{ m \in \mathcal{C}_g \mid e(m)=-1\},& \rm{card}(O_g)=2^{g-1}(2^g-1).
\end{array}
\ees
Note that a theta constant $\theta_m$ is non vanishing if and only if $m \in E_g$. The subsets $E_g$ and $O_g$ can be actually regarded as the two orbits into which the set $\mathcal{C}_g$ decomposes under the action of the Siegel modular group $\Gamma_g$, defined as follows:
\begin{align} \label{action}
\cdot:\Gamma_g\times \mathcal{C}_g &\longrightarrow \mathcal{C}_g \cr
(\gamma,m) &\longmapsto \gamma \cdot\begin{bmatrix} m' \\ m'' \end{bmatrix} := \left [ \begin{pmatrix} d &  -c \\ - b &  a \end{pmatrix} \begin{pmatrix} m' \\ m'' \end{pmatrix} + \begin{pmatrix} diag(c{}^t  d ) \\ diag( a{}^t  b)  \end{pmatrix} \right ] \text{mod}\, 2. 
\end{align}
The behaviour of the Riemann theta functions under the actions in (\ref{act}) and (\ref{action}) is described by the so called \emph{transformation formula} (cf. \cite{Igusa3} and \cite{Igusa10}):
\begin{align} \label{eq:transth}
&\theta_{\gamma\cdot m}(\gamma\cdot\tau,{}^t(c\tau+d)^{-1}z)=\epsilon_\gamma(m)\Phi(m,\gamma,\tau,z){\rm det}(c\tau+d)^{1/2}\theta_m(\tau,z) \cr
&\forall\gamma\in\Gamma_g,\,\forall\tau\in\hh_g,\,\forall z\in \C^g,\, \forall m \in \mathcal{C}_g,
\end{align}
where $\epsilon_\gamma(m)$ denotes a sign depending on the choice of the representative\footnote{Note that Equation \eqref{signtheta}, which holds for any $m,n \in\Z^g \times \Z^g$, causes the sign ambiguity, as we are focusing on reduced characteristics.} for $\gamma\cdot m$ in $\Z^g\times \Z^g$, and the function $\Phi$ is the product of two factors:
\bes
\Phi(m,\gamma,\tau,z)=\kappa(\gamma)\exp\left\{\frac 12 {}^t z\left[(c\tau+d)^{-1}c\right] z+2\phi_m(\gamma)\right\},
\ees
where
\bes
\phi_m(\gamma)=-\frac 18({}^tm'\,{}^tb\,d\,m' +{}^tm''\,{}^ta\, c\,m''  - 2{}^tm'\,{}^tb\, c\,m'')+ \frac 14 {}^t{\rm diag}(a{}^tb)(d\,m'-c\,m''),
\ees
and, for each $\gamma\in\Gamma_g$, $\kappa(\gamma)$ is an eighth root of the unity, whose sign is determined by choosing the sign of $\det(c\tau+d)^{1/2}$.\\
The general transformation formula for a theta constant is therefore:
\bes \label{eq:transthc}
\theta_{\gamma\cdot m} (\gamma \cdot\tau) = \epsilon_\gamma(m)\kappa(\gamma) \chi_m(\gamma) {\det(c \tau + d)}^{\frac{1}{2}} \theta_m (\tau),
\ees
where:
\be
\label{chi}
\chi_m (\gamma) :=  \Phi (m, \gamma, \tau, 0) = e^{2 \pi i \phi_m(\gamma)}.
\ee
In particular, $\gamma \cdot m = m$ and $\epsilon_\gamma(m)=1$ whenever $\gamma \in \Gamma_g(2)$, because $\Gamma_g(2)$ acts trivially on $\mathcal{C}_g$.\\

\section{Gradients of odd theta functions}

Whenever $n \in O_g$ the theta gradient:
\bes
{\rm grad}^0_z\,\theta_n:={\rm grad}_z\theta_n|_{z=0}=\left(\frac{\partial}{\partial z_1}\theta_n|_{z=0},\ldots ,\frac{\partial}{\partial z_g}\theta_n|_{z=0} \right)
\ees
is easily seen to be a non trivial function. Differentiating both the left and the right term of \eqref{eq:transth} one gets the following transformation law:
\begin{align} \label{eq:transgrad}
&{\rm grad}^0_z\,\theta_{\gamma\cdot n}(\gamma\cdot\tau)=\epsilon_\gamma(n)\kappa(\gamma)\chi_n(\gamma)\det(c\tau+d)^{\frac 12}(c\tau+d){\rm grad}^0_z\theta_n(\tau), \cr
&\forall \gamma\in\Gamma_g,\,\forall \tau\in\hh_g.
\end{align}
The Jacobian determinant of $g$ gradients of odd theta functions in genus $g$ is defined as:
\begin{equation*}
D(n_1,\ldots, n_g) := \pi^{-g}\frac{\partial (\theta_{n_1}, \ldots, \theta_{n_g})}{\partial (z_1,\ldots, z_g)}   \quad \quad \forall n_1,\ldots, n_g \in O_g
\end{equation*}
and transform as follows (cf. \cite{M1T}):
\begin{equation}
\label{Dtransformation}
D(\gamma\cdot n_1,\ldots, \gamma\cdot n_g)(\gamma \cdot \tau)=
\kappa(\gamma)^g\det(c\tau+d)^{g/2+1}\chi_{n_1}(\gamma)\cdot\ldots\cdot\chi_{n_g}(\gamma)D(n_1,\ldots, n_g)(\tau).
\end{equation}
They are non trivial if and only if $n_1,\ldots,n_g$ is such that $e(n_i)e(n_j)e(n_k)e(n_i+n_j+n_k)=-1$ for any $1\leq i<j<k\leq g$ (cf. \cite{Igusa1}). Such $g-$tuple is called an \emph{azygetic $g-$tuple}.

From now on we focus on the genus three case. 
It is classically known that a non singular genus three curve has exactly 28 bitangents which are in bijection with the gradients of odd theta functions. We briefly recall this construction. 
Let $C$ be a non hyperelliptic curve of genus $3$. The period matrix $\tau$ associated with its canonical model naturally defines a complex abelian variety $\Lambda_\tau: = \mathbb{C}^3/(\mathbb{Z}^3+\tau\mathbb{Z}^3)$ whose principal polarization is the Chern class $c(L)$ of the line bundle $L$ associated with the theta divisor $\theta_0$ on $\Lambda_\tau$. Hence $\tau$ actually defines a point in the moduli space $\mathcal{A}_3$.
The Gauss map $\mathcal{G}$ is well defined on the set $\theta_0^r$ of the regular points of the theta divisor:
\begin{align*}
\mathcal{G}:\theta_0^r&\longrightarrow \pp^2 \\
z &\longmapsto {\rm grad}_z\theta_0(\tau,z).
\end{align*}
There are exactly 28 two torsion points of $\Lambda_\tau$ which belong to $\theta_0^r$, namely those of the form $\frac{n'}{2}+\tau\frac{n''}{2}$ with $n',n''\in \mathbb{Z}_2^3$ such that $n=[{}^{n'}_{n''}]\in O_3$.
It is a well known fact that the 28 lines in $\pp^2$ that correspond by duality to the images of these 28 points under the Gauss map are  the bitangents of the curve. Since $\theta_0(\tau,\frac{n'}{2}+\tau\frac{n''}{2})=\theta_n(\tau,0)$, with $n=[{}^{n'}_{n''}]$, the 28 bitangents are actually in bijective correspondence with the gradients of odd theta functions and their equations in $\pp^2$ are:
\be \label{eqbit}
\sum_{i=1}^3 \frac{\partial \theta_{n_j}}{\partial z_i}(\tau,0) z_i=0, \qquad j=1,\ldots,28.
\ee


\section{Proof of the Caporaso-Sernesi theorem}
As in the previous section, let $C$ be a non hyperelliptic curve of genus $3$, then $\theta_m(\tau) \neq 0$ for any $m \in E_3$. Conversely, any point $\tau \in \mathcal{A}_3$ outside the zero locus of the theta constants represents the period matrix of such a curve, uniquely determined up to isomorphisms. 
Let $b_{n_1}(\tau), \dots , b_{n_{28}}(\tau)$ denote the points in $\pp^{2}$ identifying the $28$ bitangents of the curve represented by $\tau$.
Thanks to \eqref{eqbit}, this means that each $b_{n_i}$ is equal to:
\be
\left[\frac{\partial \theta_{n_i}}{\partial z_1}(\tau,0):\frac{\partial \theta_{n_i}}{\partial z_2}(\tau,0):\frac{\partial \theta_{n_i}}{\partial z_3}(\tau,0)\right].
\ee
Theorem \ref{thm:CS} actually claims that whenever $b_{n_i}(\tau)=b_{n_i}(\tau')$ for any $1\leq i\leq 28$, then $\tau$ and $\tau'$ represent the same curve.
Because the group $\Gamma/\Gamma_3(2)\simeq \rm{Sp}(6,\mathbb{Z}_2)$ acts on the 28 odd characteristics, and consequently on the bitangents, just by permutation, it is enough to focus on the Siegel modular variety $\mathcal{A}_3[2]:=\hh_3/\Gamma_3(2)$. More precisely we shall prove the following theorem.
\begin{theorem} \label{thm:2}
Let $\mathcal{A}_3^*[2]$ be the open set of $\mathcal{A}_3[2]$ defined by the condition that no theta constant vanishes, then the map:
\begin{align} \label{map2}
\mathcal{A}_3^*[2]&\rightarrow  {\overbrace{\pp^{2} \times \cdots \times \pp^{2}}^{28 \, \text{times}}} \cr
 \tau &\mapsto [b_{n_1}(\tau):\ldots :b_{n_{28}}(\tau)]
\end{align}
is injective.
\end{theorem}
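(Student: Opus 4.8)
The plan is to reduce the hypothesis ``the $28$ bitangent points coincide'' to the equality of every ratio of theta constants, and then to invoke the injectivity of the $\theta^{(4)}$ map \cite{SM}. Suppose $\tau,\tau'\in\mathcal{A}_3^*[2]$ have the same image, so that for each odd characteristic $n_i$ the vectors ${\rm grad}^0_z\theta_{n_i}(\tau)$ and ${\rm grad}^0_z\theta_{n_i}(\tau')$ are nonzero representatives of the same point $b_{n_i}\in\pp^2$; hence they are proportional,
\[
{\rm grad}^0_z\theta_{n_i}(\tau)=\lambda_{n_i}\,{\rm grad}^0_z\theta_{n_i}(\tau'),\qquad \lambda_{n_i}\in\C^*.
\]
Since $D(n_i,n_j,n_k)$ is, up to the fixed constant $\pi^{-3}$, the determinant of the matrix whose rows are the three gradients indexed by $n_i,n_j,n_k$, multilinearity of the determinant gives at once
\[
D(n_i,n_j,n_k)(\tau)=\lambda_{n_i}\lambda_{n_j}\lambda_{n_k}\,D(n_i,n_j,n_k)(\tau').
\]
The essential bookkeeping point is that the scalar distortion of each Jacobian determinant is the product of the $\lambda_n$ over exactly the three characteristics that define it.

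Next I would substitute this into Weber's formula \cite{W}. For a pair $m_1,m_2\in E_3$ it writes $\theta_{m_1}/\theta_{m_2}$ as a quotient of two monomials in the $D(n_1,n_2,n_3)$; replacing each factor by the relation above, the numerator at $\tau$ equals the numerator at $\tau'$ multiplied by $\prod_{n}\lambda_n^{a_n}$, where $a_n$ counts the occurrences of the characteristic $n$ among the triples of the numerator, and likewise the denominator acquires $\prod_n\lambda_n^{b_n}$. The decisive feature of Weber's formula highlighted in the introduction is that every odd characteristic occurs with the same multiplicity in numerator and denominator, i.e.\ $a_n=b_n$ for all $n$; therefore $\prod_n\lambda_n^{a_n}=\prod_n\lambda_n^{b_n}$ and the entire scalar distortion cancels, leaving
\[
\frac{\theta_{m_1}}{\theta_{m_2}}(\tau)=\frac{\theta_{m_1}}{\theta_{m_2}}(\tau')\qquad\text{for all }m_1,m_2\in E_3.
\]
Here I use that on $\mathcal{A}_3^*[2]$ the azygetic Jacobian determinants entering Weber's formula are nonzero, so that the identity and the divisions are legitimate.

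To conclude, fixing one even characteristic $m_2$ and letting $m_1$ range shows that the vectors $(\theta_m(\tau))_{m\in E_3}$ and $(\theta_m(\tau'))_{m\in E_3}$ are proportional, i.e.\ they define the same point of $\pp^{35}$; raising the common proportionality factor to the fourth power, the same holds for the vectors $(\theta_m^4)_{m\in E_3}$. Since the $\theta^{(4)}$ map descends to $\mathcal{A}_3[2]$ and is injective there by \cite{SM}, this forces $\tau=\tau'$ in $\mathcal{A}_3^*[2]$, which is the assertion of the theorem.

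I expect the crux to be exactly the cancellation just used: one must be sure that the proportionality factor is genuinely attached to the characteristic $n$ and not to the particular triple, so that the exponents $a_n,b_n$ are honest multiplicities of characteristics, and that the balancing $a_n=b_n$ holds for a family of pairs $(m_1,m_2)$ rich enough to pin down the whole point of $\pp^{35}$. A secondary technical point is the interplay of level structures: the ratios $\theta_{m_1}/\theta_{m_2}$ are a priori functions on $\mathcal{A}_3^{4,8}$ because of the characters $\chi_m$, and it is the passage to fourth powers that lets the final step be carried out through the $\theta^{(4)}$ map on $\mathcal{A}_3[2]$.
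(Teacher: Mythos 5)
Your argument is correct and is essentially the paper's own proof: both reduce the hypothesis to the statement that each Jacobian determinant changes only by the product of the scalars attached to its three odd characteristics, use the balanced multiplicities in Weber's formula to cancel these scalars, and conclude via the injectivity of the $\theta^{(4)}$ map on $\mathcal{A}_3[2]$. The only slip is that Weber's formula controls $\left(\theta_{m_1}/\theta_{m_2}\right)^4$ rather than $\theta_{m_1}/\theta_{m_2}$ itself, so your displayed equality after the cancellation should be stated for fourth powers of the ratios --- but since the final step only invokes $\theta^{(4)}$, this costs nothing.
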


We resort to Weber's formula \cite{W} to prove Theorem \ref{thm:2}, and consequently Theorem \ref{thm:CS}. We briefly recall it (see \cite{NR} for further details and a new proof).
First we need the following definition.
\begin{deff}
An Aronhold set is a 7-tuple $n_1,\ldots,n_7$ of odd characteristics such that the 8-tuple $n_1,\ldots,n_7,\sum_{i=1}^7 n_i$ is azygetic.
\end{deff}
Whenever $n_1,\ldots,n_7$ is an Aronhold set, then necessarily $m=\sum_{i=1}^7 n_i\in E_3$ (see \cite{DFS} for details). The remaining 21 odd characteristics are of the form $n_{ij}:=m+n_i+n_j$ (cf. \cite{NR}). We denote by $\beta_i(\tau)\in\mathbb{C}^3$ any choice of homogeneous coordinates for the point $b_{n_i}(\tau)$ in $\pp^2$, for $1\leq i \leq 7$, and analogously by $\beta_{ij}(\tau)\in\mathbb{C}^3$ any choice of homogeneous coordinates for the point $b_{n_{ij}}(\tau)$ in $\pp^2$.
\begin{theorem}[Weber's formula]
Let $m_1,m_2\in E_3$ distinct and let $\{n_1,\ldots ,n_7\}$ be an Aronhold set such that $m_1=\sum_{i=1}^7 n_i$ and $n_1+n_2+n_3=m_2$. Let $\tau\in \mathcal{A}_3$ be a point representing the period matrix of a non hyperelliptic curve. Then:
\be \label{weberformula}
\left(\frac{\theta_{m_1}(\tau)}{\theta_{m_2}(\tau)} \right)^4=e(m_1+m_2)\frac{D[\beta_1,\beta_2,\beta_3](\tau) D[\beta_1,\beta_{12},\beta_{13}](\tau) D[\beta_{12},\beta_2,\beta_{23}](\tau) D[\beta_{13},\beta_{23},\beta_3](\tau)}{D[\beta_{23},\beta_{13},\beta_{12}](\tau) D[\beta_{23},\beta_3,\beta_2](\tau) D[\beta_3,\beta_{13},\beta_1](\tau) D[\beta_2,\beta_1,\beta_{12}](\tau)},
\ee
where $D[\beta_{i},\beta_j,\beta_k]$ is the determinant\footnote{The indices $i$, $j$ and $k$ may also denote a double index, for example such as $\beta_{lm}$.} of the $3\times 3$ matrix whose columns are $\beta_i$, $\beta_j$ and $\beta_k$.
\end{theorem}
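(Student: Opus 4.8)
The plan is to derive Weber's formula \eqref{weberformula} from the genus-three analogue of Jacobi's derivative formula together with the combinatorics of the Aronhold set, essentially as in \cite{NR}. By the transformation law \eqref{Dtransformation} each Jacobian $D(n_i,n_j,n_k)$ is a modular form of weight $g/2+1=5/2$ carrying the multiplier $\kappa(\gamma)^3\chi_{n_i}(\gamma)\chi_{n_j}(\gamma)\chi_{n_k}(\gamma)$, whereas an even theta constant $\theta_m$ has weight $1/2$; hence a squarefree product of five even theta constants has exactly the weight of one Jacobian. The genus-three Jacobi derivative formula asserts that for every azygetic triple one has $D(n_i,n_j,n_k)=\pm\pi^3\prod_{m\in S(n_i,n_j,n_k)}\theta_m$, where $S(n_i,n_j,n_k)$ is a set of five even characteristics attached to the triple. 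First I would establish this identity by showing that the quotient of its two sides is a modular function for $\Gamma_3(4,8)$ that is regular and nowhere vanishing, hence constant, the set $S$ being pinned down by matching the theta multiplier system, i.e. by the linear and quadratic identities satisfied by $\phi_m(\gamma)$.

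Next I would record the structural feature that organises \eqref{weberformula}. Writing $n_{ij}=m_1+n_i+n_j$ and using $2x=0$ in $\mathcal{C}_3$, a direct computation shows that each of the four triples appearing in the numerator sums to $n_1+n_2+n_3=m_2$, while each of the four triples in the denominator sums to $m_1$. Since a triple of odd characteristics whose sum is even is automatically azygetic (indeed $e(n_i)e(n_j)e(n_k)\,e(n_i+n_j+n_k)=(-1)^3\cdot 1=-1$, cf. \cite{Igusa1}), all eight Jacobians are generically non-zero and the formula of the previous paragraph applies to each. Moreover each of the six characteristics $n_1,n_2,n_3,n_{12},n_{13},n_{23}$ occurs exactly twice among the four numerator determinants and exactly twice among the four denominator determinants, so the right-hand side does not depend on the homogeneous representatives $\beta$; I may therefore take $\beta_i={\rm grad}^0_z\theta_{n_i}$ and replace each $D[\beta_i,\beta_j,\beta_k]$ by $\pi^3 D(n_i,n_j,n_k)$, the powers of $\pi$ cancelling between numerator and denominator.

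Substituting the Jacobi derivative formula now rewrites the right-hand side as a ratio of a product of twenty even theta constants (counted with multiplicity) over another such product. The core of the argument is to check that these two multisets of even characteristics coincide except for a surplus of $\theta_{m_1}^4$ in the numerator and $\theta_{m_2}^4$ in the denominator, so that after cancellation exactly $(\theta_{m_1}/\theta_{m_2})^4$ survives. Equivalently, in divisorial terms both sides are weight-zero modular functions and one shows that the auxiliary theta-null divisors arising from the sixteen remaining even characteristics cancel between numerator and denominator, leaving the divisor $4\,[\theta_{m_1}=0]-4\,[\theta_{m_2}=0]$; the two-family structure isolated above is precisely what renders this bookkeeping symmetric.

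I expect the main obstacle to be this cancellation together with the determination of the sign. Making $S(n_i,n_j,n_k)$ explicit for each of the eight triples and verifying that the auxiliary characteristics match in pairs is a finite but delicate computation governed by the syzygetic and azygetic relations in $\mathcal{C}_3$ and by the associated G\"opel and Steiner configurations. The sign is the more sensitive point: each of the eight applications of the Jacobi derivative formula carries its own $\pm$, and the residual factors $\epsilon_\gamma$, $\kappa$ and $\chi$ must be tracked through all the determinants; these should collapse to the parity $e(m_1+m_2)$, but confirming this demands a careful, consistent choice of the branch of $\det(c\tau+d)^{1/2}$ and of the representatives of the characteristics.
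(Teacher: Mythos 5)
First, a point of comparison: the paper does not prove this theorem at all. Weber's formula is imported as a known result, with \cite{W} for the original proof and \cite{NR} for a modern one; the only ingredients of your sketch that the paper actually uses are your (correct) structural observations, namely that a triple of odd characteristics with even sum is azygetic, that the four numerator triples sum to $m_2$ and the four denominator triples to $m_1$, and that each of $n_1,n_2,n_3,n_{12},n_{13},n_{23}$ occurs exactly twice on each side of \eqref{weberformula} (this last fact is what the paper invokes, in the proof of Theorem \ref{thm:2}, to justify the independence from the choice of homogeneous coordinates). Those computations of yours check out.

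The proof strategy itself, however, has a fatal gap at its first step. The ``genus-three Jacobi derivative formula'' $D(n_i,n_j,n_k)=\pm\pi^3\prod_{m\in S}\theta_m$ with $S$ a set of five even characteristics is \emph{false}: Igusa proved in the very reference \cite{Igusa1} that the naive monomial generalization of Jacobi's derivative formula holds only for $g\le 2$. The weight and multiplier bookkeeping you propose is consistent (both sides would have weight $5/2$), but the divisors do not match, so the quotient is not a nowhere vanishing regular function and the ``hence constant'' step fails. Concretely, on a component of the hyperelliptic locus in $\mathcal{A}_3$ exactly one even theta constant vanishes, so any candidate product $\prod_{m\in S}\theta_m$ vanishes on five such components; but for a hyperelliptic Jacobian the $28$ odd theta gradients correspond to the $28$ chords through pairs of the $8$ branch points on the canonical conic, and three such chords are not concurrent for generic branch points, so $D(n_i,n_j,n_k)$ does not vanish identically on any hyperelliptic component. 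With no monomial expression for the individual Jacobians, your central cancellation step (two multisets of twenty even characteristics agreeing up to $\theta_{m_1}^4/\theta_{m_2}^4$) has nothing to operate on. This failure is precisely why Weber's formula must bundle four Jacobians against four, rather than expressing a single ratio $\theta_{m_1}/\theta_{m_2}$ by a single ratio of Jacobians; a correct proof has to work with the eight-fold ratio as a whole (as in \cite{NR}, via the classical representation of the quartic and Riemann's relations among bitangents), not term by term.
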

Now we can prove the statement of Theorem \ref{thm:2}.
\begin{proof}[Proof of Theorem \ref{thm:2}]
First we observe that in the right side of Weber's formula \eqref{weberformula} each $\beta_i$ or $\beta_{ij}$ appears as many times in the numerator as in the denominator, which means that the formula does not depend on the particular choice of the homogeneous  coordinates. Therefore we can choose as coordinates those in equation \eqref{eqbit}. Hence all the determinants in the expression can be replaced with the corresponding Jacobian determinants. This means that if we set:
\be
Q(\tau):=
\frac{D(n_1,n_2,n_3)(\tau) D(n_1,n_{12},n_{13})(\tau) D(n_{12},n_2,n_{23})(\tau) D(n_{13},n_{23},n_3)(\tau)}{D(n_{23},n_{13},n_{12})(\tau) D(n_{23},n_3,n_2)(\tau) D(n_3,n_{13},n_1)(\tau) D(n_2,n_1,n_{12})(\tau)},
\ee
then $Q(\tau)=Q(\tau')$ whenever $\tau,\tau'\in\hh_3$ are the period matrices of two curves $C$ and $C'$ with the same system of bitangents and consequently:
\be
\label{thetaquotients}
\left(\frac{\theta_{m_1}(\tau)}{\theta_{m_2}(\tau)} \right)^4 = \left(\frac{\theta_{m_1}(\tau')}{\theta_{m_2}(\tau')} \right)^4 \qquad \forall\, m_1,m_2\in E_3.
\ee
Now consider the map:
\begin{align*}
 \theta^{(4)}: \mathcal{A}_3[2] &\rightarrow \pp^{35} \\ 
 \quad \tau &\mapsto [\theta_{m_1}^4(\tau):\ldots:\theta_{m_{36}}^4(\tau)].
\end{align*}
For any $\tau \in \mathcal{A}_3^*[2]$, there exists $1\leq i\leq 36$ such that the expression for $ \theta^{(4)}(\tau)$ clearly reduces to:
\begin{align*}
 \theta^{(4)}(\tau) = \left [\left(\frac{\theta_{m_1}(\tau)}{\theta_{m_i}(\tau)}\right)^4:\ldots:\left(\frac{\theta_{m_{i-1}}(\tau)}{\theta_{m_i}(\tau)}\right)^4:1:\left(\frac{\theta_{m_{i+1}}(\tau)}{\theta_{m_i}(\tau)}\right)^4:\ldots:\left(\frac{\theta_{m_{36}}(\tau)}{\theta_{m_i}(\tau)}\right)^4 \right ],
\end{align*}
Therefore, thanks to (\ref{thetaquotients}), two curves with the same system of bitangents have the same image under the  map $ \theta^{(4)}$. The injectivity of the map $ \theta^{(4)}$
(cf. \cite{SM}, Theorem 3) implies that $\tau$ and $\tau'$ are in the same orbit under the action of $\Gamma_3(2)$, which proves the statement of the theorem.
\end{proof}

\begin{remark}
As explained in the introduction, an identity of the form \eqref{ident} exists for any genus $g$, hence there is also an analogous of Weber's formula for higher genus. The technique used in the proof of Theorem \ref{thm:2} can be, therefore, extended to any genus provided that such a generalization of Weber's formula can be written as in \eqref{weberformula} with each odd characteristic appearing in the numerator as many times as in the denominator. This has to be proved yet.
\end{remark}


\end{document}